\providecommand{\U}[1]{\protect\rule{.1in}{.1in}}
\newtheorem{theorem}{Theorem}
\newenvironment{proof}[1][Proof]{\noindent\textbf{#1.} }{\ \rule{0.5em}{0.5em}}
\begin{document}

\title{\emph{2}-Degenerate Bertrand curves in Minkowski spacetime}
\author{Mehmet G\"{o}\c{c}men and Sad\i k Kele\c{s}\\Department of Mathematics, Faculty of Arts and Sciences, \\\.{I}n\"{o}n\"{u} \ University 44280 Malatya Turkey.}
\maketitle

\begin{abstract}
In this paper we define a new type of \emph{2}-degenerate Cartan curves in
Minkowski spacetime $\left(  R_{1}^{4}\right)  $. We prove that this type of
curves contain only the polynomial functions as its components whose third
derivative vanish completely. No curve with acceleration zero in $R_{1}^{4}$
is a \emph{2}-degenerate Cartan curve, therefore we show that the type of
curves that we search for must contain polynomials of degree two among its components.

\end{abstract}

\begin{center}

\end{center}

\section{Introduction}

\noindent\ \ \ \ H. Matsuda and S. Yorozu \cite{matsuda} introduced a new type
of curves called special Frenet curves and proved that a special Frenet curve
in $R^{n}$ is not a Bertrand curve if $n\geqslant4.$ They also improved an
idea of generalized Bertrand curve in $R^{4}$. A. Ferrandez, A. Gimenez, P.
Lucas \cite{s-degenerate} introduced the notion of \emph{s}-degenerate curves
in Lorentzian space forms. They obtained a reference along an\emph{\ s}%
-degenerate curve in an n-dimensional Lorentzian space with the minimum number
of curvatures. That reference generalizes the reference of Bonnor for null
curves in Minkowski spacetime and it would be called the Cartan frame of the
curve. The associated curvature functions are called the Cartan curvatures of
the curve. They characterized the \emph{s}-degenerate helices ( i.e,\emph{\ s}%
-degenerate curves with constant Cartan curvatures ) in n-dimensional
Lorentzian space forms and they obtained a complete classification of them in
dimension four.

Let $C$ be an \emph{s}-degenerate Cartan curve in $R_{1}^{n}.$ We call
$\ W_{j}$ the spacelike Cartan j-normal vector along $C$, and the spacelike
Cartan j-normal line of $C$ at $c(s)$ is a line generated by $W_{j}(s)$
through $c(s)$ $\left(  j=1,2,...,n-2\right)  .$ The spacelike Cartan $\left(
j,k\right)  $-normal plane of $C$ at $c(s)$ is a plane spanned by $W_{j}(s)$
and $W_{k}(s)$ through $c(s)$ $\left(  j,k=1,2,...,n-2;j\neq k\right)  .$ In
this paper we characterize \emph{2}-degenerate (1,2)-Bertrand curves in
Minkowski spacetime.

\section{Preliminaries}

\ \ \ \ Let E be a real vector space with a symmetric bilinear mapping
$g:E\times E\rightarrow R.$ We say that $g$ is degenerate on $E$ if there
exist a vector $\varepsilon\neq0$ in $E$ such that%

\[
g(\varepsilon,\nu)=0\ \ \ \text{for all }\nu\in E,
\]

\noindent otherwise, $g$ is said to be non-degenerate. The radical (also
called the null space ) of $E$, with respect to $g$, is the subspace Rad($E$)
of $E$ defined by%

\[
\text{Rad(}E\text{)}=\left\{  \varepsilon\in E\text{ such that }%
g(\varepsilon,\nu)=0,\text{ \ }\nu\in E\right\}  .
\]

\noindent For simplicity, we will use $\left\langle ,\right\rangle $ instead
of $g$. A vector $\nu$ is said to be timelike, lightlike or spacelike provided
that $g\left(  \nu,\nu\right)  <0,$ $g\left(  \nu,\nu\right)  =0$ ( and
$\nu\neq0$), or $g\left(  \nu,\nu\right)  >0$ respectively. The vector $\nu=0$
is said to be spacelike. A unit vector is a vector $u$ such that $g\left(
u,u\right)  =\mp1.$ Two vectors $u$ and $v$ are said to be orthogonal, written
$u\bot v,$ if $g\left(  u,v\right)  =0$.

Let $\left(  M_{1}^{n},\nabla\right)  $ be an oriented Lorentzian manifold and
let $C:I\rightarrow M_{1}^{n}$ be a differentiable curve in $M_{1}^{n}.$ For
any vector field $V$ along $C$, Let $V^{\prime}$ be the covariant derivative
of $V$ along $C.$ Write%
\[
E_{i}\left(  t\right)  =span\left\{  c^{\prime}\left(  t\right)
,c^{\prime\prime}\left(  t\right)  ,...,c^{\left(  i\right)  }\left(
t\right)  \right\}  ,
\]

\noindent where $t\in I$ and $i=1,2,...,n.$ Let $d$ be the number defined by%
\[
d=\max\left\{  i:\dim E_{i}\left(  t\right)  =i\text{ \ for all }t\right\}  .
\]

With the above notation, the curve $C:I\rightarrow M_{1}^{n}$ \ is said to be
an \emph{s}-degenerate\textit{\ }( or\emph{\ s}-lightlike) curve if for all
$1\leqslant i\leqslant d$, $\dim Rad\left(  E_{i}\left(  t\right)  \right)  $
is constant for all $t$, and there exist $s$, $0\leqslant s\leqslant d$, such
that $Rad\left(  E_{s}\right)  \neq\left\{  0\right\}  $ and $Rad\left(
E_{j}\right)  =\left\{  0\right\}  $ for all \ $j<s.$ Note that \emph{1}%
-degenerate curves are precisely the null (or lightlike) curves. In this paper
we will focus on \emph{2}-degenerate curves $(s=2)$ in Minkowski spacetime.
Notice that they must be spacelike curves

A \emph{spacetime} is a connected time-oriented four dimensional Lorentz
manifold. A \emph{Minkowski spacetime }$M$ is a spacetime that is isometric to
Minkowski 4-space $R_{1}^{4}$ $\cite{o'neil}$. So $R_{1}^{4}$ is a
4-dimensional Lorentz manifold furnished with the metric $\left\langle
,\right\rangle $ defined as follows%
\[
\left\langle x,y\right\rangle =-x^{0}y^{0}+x^{1}x^{1}+x^{2}x^{2}+x^{3}x^{3}%
\]

\noindent for all vectors $x,y\in R_{1}^{4}$; $x=\left(  x^{0},x^{1}%
,x^{2},x^{3}\right)  ,$ $y=\left(  y^{0},y^{1},y^{2},y^{3}\right)  ,$
$x^{i},y^{i}\in R$, $0\leqslant i\leqslant3.$

Let $C$ be a 2-degenerate Cartan curve in $R_{1}^{4}$. Then the Cartan
equations are in the following form \cite{s-degenerate}.%

\begin{align*}
c^{\prime}  &  =W_{1},\\
W_{1}^{\prime}  &  =L,\\
L^{\prime}  &  =k_{1}W_{2},\\
W_{2}^{\prime}  &  =-k_{2}L+k_{1}N,\\
N^{\prime}  &  =W_{1}-k_{2}W_{2}.
\end{align*}

\noindent where $L,N$ are null, $\left\langle L,N\right\rangle =-1,$ $\left\{
L,N\right\}  $ and $\left\{  W_{1},W_{2}\right\}  $ are orthogonal,
\newline$\left\{  W_{1},W_{2}\right\}  $ is orthonormal. $\left\{
L,N,W_{1},W_{2}\right\}  $ is positively oriented. We assume the set $\left\{
c^{\prime},c^{\prime\prime},c^{\prime\prime\prime},c^{\left(  4\right)
}\right\}  $ has the same orientation with the set $\left\{  L,N,W_{1}%
,W_{2}\right\}  ,$ so we get $k_{1}<0.$

Let $\left(  C,\overline{C}\right)  $ be a pair of \ framed null Cartan curves
in $R_{1}^{4}$, with pseudo-arc parameters $s$ and $\overline{s},$
respectively. This pair is said to be a null Bertrand pair if their spacelike
vectors $W_{1}$ and $\overline{W_{1}}$ are linearly dependent. The curve
$\overline{C}$ is called a Bertrand mate of $C$ and vice versa. A framed null
curve is said to be a null Bertrand curve if it admits a Bertrand mate
\cite{duggal}. To be precise, a null Cartan curve $C$ in $R_{1}^{4}$ $\left(
c:I\rightarrow R_{1}^{4}\right)  $ is called a Bertrand curve if there exist a
null Cartan curve $\overline{C}$ $\left(  \overline{c}:\overline{I}\rightarrow
R_{1}^{4}\right)  ,$ distinct from $C,$ and a regular map $\varphi
:I\rightarrow\overline{I}\left(  \overline{s}=\varphi\left(  s\right)
,\frac{d\varphi\left(  s\right)  }{ds}\neq0\text{ for all }s\in I\right)  $
such that the spacelike vectors $W_{1}$ of $C$ and $\overline{W_{1}}$ of
$\overline{C}$ are linearly dependent at each pair of corresponding points
$c\left(  s\right)  $ and $\overline{c}\left(  \overline{s}\right)
=\overline{c}\left(  \varphi\left(  s\right)  \right)  $ under $\varphi.$

\section{(1,2)-Bertrand curves in $R_{1}^{4}$}

\ \ \ \ Let $C$ and $\overline{C\text{ }}$be 2-degenerate Cartan curves in
$R_{1}^{4}$ and $\varphi:I\rightarrow\overline{I}$ a regular map $\left(
\overline{s}=\varphi\left(  s\right)  ,\frac{d\varphi\left(  s\right)  }%
{ds}\neq0\text{ for all }s\in I\right)  $ such that each point $c\left(
s\right)  $ of $C$ corresponds to the point $\overline{c}\left(  \overline
{s}\right)  $ of $\overline{C}$ under $\varphi$ for all $s\in I.$ Here $s$ and
$\overline{s}$ are pseudo-arc parameters of $C$ and $\overline{C}$
respectively. If the Cartan (1,2)-normal plane at each point $c\left(
s\right)  $ of $C$ coincides with the Cartan (1,2)-normal plane at
corresponding point $\overline{c}\left(  \overline{s}\right)  =\overline
{c}\left(  \varphi\left(  s\right)  \right)  $ of $\overline{C}$ for all $s\in
I,$ then $C$ is called the (1,2)-Bertrand curve in $R_{1}^{4}$ and
$\overline{C}$ is called the (1,2)-Bertrand mate of $C.$

\begin{theorem}
Let $C$ be a \emph{2}-degenerate Cartan curve in $R_{1}^{4}$ with curvature
functions $k_{1},k_{2}$. Then $C$ is a (1,2)-Bertrand curve if and only if
there are polynomial functions $\alpha$ and $\beta$ satisfying
\begin{align}
\beta\left(  s\right)   &  \neq0\tag*{(a)}\\
k_{1}\left(  s\right)   &  =0\tag*{(b)}\\
k_{2}\left(  s\right)   &  =\frac{\alpha\left(  s\right)  }{\beta\left(
s\right)  }\tag*{(c)}\\
\beta^{\prime}\left(  s\right)   &  \neq0\tag*{(d)}\\
\left(  1+\alpha^{\prime}\left(  s\right)  \right)  ^{2}+\left(  \beta
^{\prime}\left(  s\right)  \right)  ^{2}  &  \neq0\tag*{(e)}\\
\max\deg\left\{  \alpha\left(  s\right)  \right\}   &  =1\tag*{(f)}\\
\deg\left\{  \beta\left(  s\right)  \right\}   &  =1 \tag*{(g)}%
\end{align}
for all $s\in I.$ By \emph{(f)}, we mean the maximum degree of the set
containing the polynomial function $\alpha$ is one.
\end{theorem}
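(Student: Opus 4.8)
The plan is to prove both implications from the Cartan equations together with the defining coincidence of the $(1,2)$-normal planes. For the forward direction, since the $(1,2)$-normal plane of $C$ at $c(s)$ coincides with that of $\overline{C}$ at $\overline{c}(\varphi(s))$, the point $\overline{c}(\varphi(s))$ lies in the plane through $c(s)$ spanned by $W_1(s),W_2(s)$; hence we may write $\overline{c}(\varphi(s)) = c(s) + \alpha(s)W_1(s) + \beta(s)W_2(s)$ for smooth functions $\alpha,\beta$. Differentiating and using $c'=W_1$, $W_1'=L$, $W_2'=-k_2L+k_1N$ gives
\[
\overline{W_1}\,\varphi' = (1+\alpha')W_1 + \beta'W_2 + (\alpha-\beta k_2)L + \beta k_1 N .
\]
Because the planes coincide, $\overline{W_1}\in\mathrm{span}\{W_1,W_2\}$, so the $L$- and $N$-components on the right must vanish, yielding $\beta k_1=0$ and $\alpha-\beta k_2=0$. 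Assuming (a) $\beta\neq0$, we obtain (b) $k_1=0$ and (c) $k_2=\alpha/\beta$ immediately.

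Next I extract (e): taking the inner product of the displayed identity with itself, using that $\overline{W_1}$ is unit spacelike and $\{W_1,W_2\}$ orthonormal (the $L,N$ parts being gone), gives $(\varphi')^2=(1+\alpha')^2+(\beta')^2$, and regularity of $\varphi$ forces this to be nonzero, which is exactly (e). Differentiating the identity a second time with $k_1=0$ and projecting onto $W_1,W_2$ via $\overline{L}\perp\mathrm{span}\{W_1,W_2\}$ gives $\alpha''/(1+\alpha')=\varphi''/\varphi'=\beta''/\beta'$; integrating produces constants $A,B$ with $1+\alpha'=B\varphi'$, $\beta'=A\varphi'$, and $A^2+B^2=1$. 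Projecting the same identity onto $L$ shows $\overline{L}=g_0L$; since $k_1=0$ makes $L$ constant while $\overline{L}'=\overline{k_1}\,\overline{W_2}$ is spacelike unless it vanishes, this simultaneously forces $g_0$ constant and $\overline{k_1}=0$. Hence both $c$ and $\overline{c}$ satisfy $c'''=0$ (respectively $\overline{c}'''=0$): their coordinate components are polynomials of degree at most two.

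The heart of the argument is converting this polynomial structure into (f), (g) and the nonvanishing (a), (d). With $k_1=0$ one has $L$ constant, $W_1(s)=W_1(0)+sL$, and $W_2(s)=W_2(0)-K(s)L$ with $K=\int k_2$; substituting into $\overline{c}(\varphi(s))=c(s)+\alpha W_1+\beta W_2$ and matching against the degree-two polynomial $\overline{c}(\overline{s})$ in $\overline{s}=\varphi(s)$, one eliminates $\varphi$ and $K$ using $1+\alpha'=B\varphi'$, $\beta'=A\varphi'$, and $B-Ak_2=g_0\varphi'$. A short computation gives $(B\beta-A\alpha)'=A$, so $B\beta-A\alpha$ is affine in $s$, and feeding this back into $\beta'=A\varphi'$ yields $g_0\,\beta\beta'=A(As+E)$; integrating shows $\beta^2$ is a quadratic polynomial in $s$. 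Reconciling this with the fact that $\overline{c}$ is itself a genuine degree-two polynomial curve then forces $\beta$ to be a true degree-one polynomial, whose leading coefficient $A$ is nonzero, which is precisely what (d) $\beta'\neq0$ records; consequently $\alpha=B\varphi-s+\mathrm{const}$ has degree at most one, giving (f), while (a) holds on the interval avoiding the zero of the linear factor of $\beta$.

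For the converse, given polynomials $\alpha,\beta$ satisfying (a)--(g) and a $2$-degenerate Cartan curve $C$ with $k_1=0$, $k_2=\alpha/\beta$, I would set $\overline{c}=c+\alpha W_1+\beta W_2$, use (e) to see that $\overline{c}'=(1+\alpha')W_1+\beta'W_2$ is a nonzero spacelike field admitting a pseudo-arc reparametrization $\overline{s}=\varphi(s)$ with $(\varphi')^2=(1+\alpha')^2+(\beta')^2$, and verify directly from the Cartan equations that $\overline{c}$ is again a $2$-degenerate Cartan curve whose $(1,2)$-normal plane coincides with that of $C$ at corresponding points, with (a), (d), (e) guaranteeing regularity of $\varphi$ and distinctness of $\overline{C}$. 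The step I expect to be the main obstacle is exactly the passage from ``$\beta^2$ is a quadratic'' to ``$\beta$ is a degree-one polynomial'': the differential and frame relations alone are consistent with $\beta$ being a square root of a positive definite quadratic, so pinning down genuine polynomiality and the precise degrees (f), (g) must exploit that both $c$ and $\overline{c}$ have identically vanishing third derivative, and executing this coefficient matching cleanly is the delicate part of the proof.
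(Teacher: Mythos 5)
Your forward direction tracks the paper through (a)--(c) and (e): both write $\overline{c}(\varphi(s))=c(s)+\alpha(s)W_{1}(s)+\beta(s)W_{2}(s)$, differentiate, kill the $L$- and $N$-components, and square the tangent relation to get $(1+\alpha')^{2}+(\beta')^{2}=(\varphi')^{2}$. Your derivation of $\overline{k_{1}}=0$ by projecting onto $L$ (two mutually orthogonal null vectors are collinear, so $\overline{L}=g_{0}L$, and constancy of $L$ forces $g_{0}$ constant and $\overline{k_{1}}=0$) is a nice self-contained alternative to the paper, which instead invokes symmetry of the Bertrand relation ($\overline{C}$ is itself a Bertrand curve, so (b) applies to it). The first genuine gap is that you never use the hypothesis that the mate is \emph{distinct} from $C$, and that hypothesis is exactly what the paper uses to get (d): writing $\overline{W_{1}}=\cos\theta\,W_{1}+\sin\theta\,W_{2}$, the paper argues $\sin\theta\neq0$ from distinctness and then reads off $\beta'=\varphi'\sin\theta\neq0$. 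In your notation $\sin\theta=A$, and your claim that ``reconciling with the polynomial structure'' forces $A\neq0$ cannot work: the configuration $A=0$ (that is, $\beta$ a nonzero constant, $\alpha=\beta k_{2}$, $\overline{W_{1}}=\pm W_{1}$, $\varphi$ linear) satisfies every differential and polynomial relation you derive --- both curves quadratic, planes coinciding, $\varphi$ regular --- so no coefficient matching can exclude it; only the distinctness requirement can. Worse, your identity $g_{0}\beta\beta'=A(As+E)$ is obtained by dividing by $A$, so using it to conclude $A\neq0$ is circular. The treatment of (a) has the same defect: it is assumed at the outset (``Assuming (a)'') and at the end replaced by shrinking the interval to avoid the zero of $\beta$, which does not prove $\beta\neq0$ on $I$.

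The second gap is the one you flag yourself, and it is fatal to your route: passing from $g_{0}\beta^{2}=(As+E)^{2}+\mathrm{const}$ to ``$\beta$ is a degree-one polynomial.'' As you concede, the frame and differential relations alone are consistent with a nonzero integration constant, i.e.\ with $\beta$ being the square root of a positive quadratic, so polynomiality of $\beta$ simply does not follow from the relations you use; there is no reconciliation step to execute. The paper closes (f) and (g) by a different mechanism: having $\overline{k_{1}}=0$, both curves have vanishing third derivative, hence are quadratic in their pseudo-arc parameters; the paper then asserts that $\varphi$ must be \emph{linear}, so $\varphi'=\lambda$ is a nonzero constant (this is the load-bearing step, offered with minimal justification). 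Granting it, $\alpha=\langle\delta,W_{1}(0)\rangle$ and $\beta=\langle\delta,W_{2}(0)\rangle$, where $\delta=\overline{c}\circ\varphi-c$ is a quadratic vector with $\langle\delta,L\rangle=0$, are honest polynomials, and then $(1+\alpha')^{2}+(\beta')^{2}=\lambda^{2}$ is a constant sum of squares of polynomials, which forces $\alpha'$ and $\beta'$ to be constants; together with (d) this yields (f) and (g). So the paper first pins down polynomiality of $\alpha,\beta$ (through linearity of $\varphi$) and only afterwards reads off degrees, whereas you try to extract polynomiality from the frame ODEs, which underdetermine it. To repair your argument you must import the two missing ingredients the paper relies on: distinctness of the mate (for (a) and (d)) and the constancy of $\varphi'$ (for (f) and (g)).
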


\begin{proof}
$\Rightarrow)$: Assume that $C$ is a (1,2)-Bertrand curve, then we can write

\begin{equation}
\overline{c}\left(  \overline{s}\right)  =\overline{c}\left(  \varphi\left(
s\right)  \right)  =c\left(  s\right)  +\alpha\left(  s\right)  W_{1}\left(
s\right)  +\beta\left(  s\right)  W_{2}\left(  s\right)  \label{s1}%
\end{equation}

\noindent And since the planes spanned by $\left\{  W_{1},W_{2}\right\}  $ and
$\left\{  \overline{W_{1}},\overline{W_{2}}\right\}  $ coincide, we can also
write%
\begin{align}
\overline{W_{1}}\left(  \overline{s}\right)   &  =\cos\theta\left(  s\right)
W_{1}\left(  s\right)  +\sin\theta\left(  s\right)  W_{2}\left(  s\right)
\label{s2}\\
\overline{W_{2}}\left(  \overline{s}\right)   &  =-\sin\theta\left(  s\right)
W_{1}\left(  s\right)  +\cos\theta\left(  s\right)  W_{2}\left(  s\right)  .
\label{h2}%
\end{align}

\noindent Notice that $\sin\theta\left(  s\right)  \neq0$ for all $s$ $\in I$.
Because if $\sin\theta\left(  s\right)  =0$, then we get the position
$\overline{W_{1}}\left(  \overline{s}\right)  =\mp W_{1}\left(  s\right)  .$
This implies that $C$ and $\overline{C}$ coincides. But we know that the
Bertrand mate $\overline{C}$ of $C$ must be distinct from $C$. So, we have
$\sin\theta\left(  s\right)  \neq0$ for all $s$ $\in I$. Now differentiating
(\ref{s1}) with respect to $s,$ we get%
\begin{align}
\overline{W_{1}}\left(  \overline{s}\right)  \frac{d\overline{s}}{ds}  &
=\left(  1+\alpha^{\prime}\left(  s\right)  \right)  W_{1}\left(  s\right)
+\left(  \alpha\left(  s\right)  -\beta\left(  s\right)  k_{2}\left(
s\right)  \right)  L\left(  s\right) \label{s3}\\
&  +\beta^{\prime}\left(  s\right)  W_{2}\left(  s\right)  +\beta\left(
s\right)  k_{1}\left(  s\right)  N\left(  s\right)  .\nonumber
\end{align}

\noindent Since we assume that the curve $C$ is (1,2)-Bertrand curve, then the
map $\varphi$ between the pseudo-arc parameters of $C$ and its Bertrand mate
$\overline{C}$ must be regular. So the following equation holds.%
\[
\frac{d\left(  \varphi\left(  s\right)  \right)  }{ds}=\frac{d\overline{s}%
}{ds}\neq0.
\]

\noindent From (\ref{s3}), we obtain\emph{\ }the relations (a), (b) and (c).
By the following facts%
\begin{align}
\frac{d\overline{s}}{ds}  &  =\left\langle \overline{W_{1}}\left(
\overline{s}\right)  ,\overline{W_{1}}\left(  \overline{s}\right)
\frac{d\overline{s}}{ds}\right\rangle =\left(  1+\alpha^{\prime}\left(
s\right)  \right)  \cos\theta\left(  s\right)  +\beta^{\prime}\left(
s\right)  \sin\theta\left(  s\right) \label{p1}\\
0  &  =\left\langle \overline{W_{2}}\left(  \overline{s}\right)
,\overline{W_{1}}\left(  \overline{s}\right)  \frac{d\overline{s}}%
{ds}\right\rangle =-\left(  1+\alpha^{\prime}\left(  s\right)  \right)
\sin\theta\left(  s\right)  +\beta^{\prime}\left(  s\right)  \cos\theta\left(
s\right)  \label{p2}%
\end{align}

\noindent we obtain%

\begin{align}
1+\alpha^{\prime}\left(  s\right)   &  =\frac{d\overline{s}}{ds}\cos
\theta\left(  s\right)  ,\label{s6}\\
\beta^{\prime}\left(  s\right)   &  =\frac{d\overline{s}}{ds}\sin\theta\left(
s\right)  . \label{f1}%
\end{align}

\noindent Since $\frac{d\overline{s}}{ds}\neq0$ and $\sin\theta\left(
s\right)  \neq0$ for all $s\in I$ in (\ref{f1}), we obtain the relation (d).
By using (\ref{s6}) and (\ref{f1}), we get%

\begin{equation}
\left(  1+\alpha^{\prime}\left(  s\right)  \right)  ^{2}+\left(  \beta
^{\prime}\left(  s\right)  \right)  ^{2}=\left(  \frac{d\overline{s}}%
{ds}\right)  ^{2}. \label{s7}%
\end{equation}

\noindent Using (\ref{s7}), we obtain (e). The Bertrand mate $\overline{C}$ of
$C$ is itself a Bertrand curve, therefore the curvature $\overline{k_{1}}$ of
$\overline{C}$ is also zero. This means that the components of the curve
$\overline{C}$ consists of polnomials whose third derivative with respect to
its pseudo-arc parameter $\overline{s}$ vanish completely. By\ using this
information and (\ref{s1}), it is obvious that the map $\varphi$ between the
pseudo-arc parameters of $C$ and $\overline{C}$ at corresponding points
$c\left(  c\right)  $ and $\overline{c}\left(  \overline{s}\right)  $
respectively, must be linear, that is, $\frac{d\varphi\left(  s\right)  }%
{ds}=\frac{d\overline{s}}{ds}$ must be a nonzero constant. Using this fact,
(\ref{s7}), and (d) we get the relations (f) and (g).

$\Leftarrow)$: Now let us think the contrary.

\noindent Let $C$ be a \emph{2}-degenerate Cartan curve in $R_{1}^{4}$ with
curvature functions $k_{1}$ and $k_{2}$ and assume that the relations
(a),(b),(c),(d),(e),(f) are satisfied for this curve.

Now define a curve $\overline{C}$ by%

\begin{equation}
\overline{c}\left(  s\right)  =c\left(  s\right)  +\alpha\left(  s\right)
W_{1}\left(  s\right)  +\beta\left(  s\right)  W_{2}\left(  s\right)
\label{s8}%
\end{equation}

\noindent where $s$ is the pseudo-arc parameter of $C$. Differentiating
(\ref{s8}) with respect to $s$, using the Frenet equations and the hypothesis
in the above, we obtain%

\begin{equation}
\frac{d\overline{c}\left(  s\right)  }{ds}=(1+\alpha^{\prime}\left(  s\right)
)W_{1}\left(  s\right)  +\beta^{\prime}\left(  s\right)  W_{2}\left(
s\right)  . \label{s9}%
\end{equation}

\noindent By using (\ref{s9}), we get%

\begin{equation}
\left(  \frac{d\overline{c}\left(  s\right)  }{ds}\right)  ^{2}=(1+\alpha
^{\prime}\left(  s\right)  )^{2}+\left(  \beta^{\prime}\left(  s\right)
\right)  ^{2}\neq0. \label{r1}%
\end{equation}

\noindent It is obvious from (\ref{r1}) that $\overline{C}$ is a regular
curve. Let us define a regular map $\varphi:s\rightarrow\overline{s}$ by%

\[
\overline{s}=\varphi\left(  s\right)  =%
{\displaystyle\int\limits_{0}^{s}}
\left\langle \frac{d\overline{c}\left(  s\right)  }{ds},\frac{d\overline
{c}\left(  s\right)  }{ds}\right\rangle ^{\frac{1}{2}}ds.
\]

\noindent where $\overline{s}$ denotes the pseudo-arc parameter of
$\overline{C}$. Then we obtain%

\begin{equation}
\frac{d\overline{s}}{ds}=\frac{d\varphi\left(  s\right)  }{ds}=\sqrt
{(1+\alpha^{\prime}\left(  s\right)  )^{2}+(\beta^{\prime}\left(  s\right)
)^{2}}>0. \label{s10}%
\end{equation}

\noindent Here, notice that%

\begin{equation}
\frac{d\overline{s}}{ds}=\lambda\label{k8}%
\end{equation}

\noindent is a nonzero constant.

\noindent Thus the curve $\overline{C}$ is rewritten as%

\begin{equation}
\overline{c}\left(  \overline{s}\right)  =\overline{c}\left(  \varphi\left(
s\right)  \right)  =c\left(  s\right)  +\alpha\left(  s\right)  W_{1}\left(
s\right)  +\beta\left(  s\right)  W_{2}\left(  s\right)  . \label{s11}%
\end{equation}

\noindent If we differentiate (\ref{s11}) with respect to $s$, use the Cartan
equations for the \emph{2}-degenerate curves in $R_{1}^{4}$ and the
hypothesis, we get%

\begin{equation}
\lambda\overline{W_{1}}\left(  \overline{s}\right)  =(1+\alpha^{\prime}\left(
s\right)  )W_{1}\left(  s\right)  +\beta^{\prime}\left(  s\right)
W_{2}\left(  s\right)  . \label{s12}%
\end{equation}

\noindent By using (\ref{s10}), (\ref{k8}) and (\ref{s12}), we can set%

\begin{equation}
\overline{W_{1}}\left(  \overline{s}\right)  =\cos\tau\left(  s\right)
W_{1}\left(  s\right)  +\sin\tau(s)W_{2}\left(  s\right)  \label{s13}%
\end{equation}

\noindent where%

\begin{align}
\cos\tau\left(  s\right)   &  =\frac{1+\alpha^{\prime}\left(  s\right)
}{\lambda},\label{k1}\\
\sin\tau(s)  &  =\frac{\beta^{\prime}\left(  s\right)  }{\lambda}. \label{k2}%
\end{align}

\noindent After differentiating (\ref{s13}) with respect to $s$, we get%
\begin{align}
\lambda\overline{L}\left(  \overline{s}\right)   &  =\frac{d\cos\tau\left(
s\right)  }{ds}W_{1}\left(  s\right)  +\frac{d\sin\tau\left(  s\right)  }%
{ds}W_{2}\left(  s\right) \label{s14}\\
&  +\left(  \cos\tau\left(  s\right)  -k_{2}\sin\tau\left(  s\right)  \right)
L\left(  s\right)  .\nonumber
\end{align}

\noindent Applying the metric $\left\langle ,\right\rangle $ on each side of
the equation (\ref{s14}), we get
\begin{equation}
(\cos^{\prime}\tau\left(  s\right)  )^{2}+(\sin^{\prime}\tau\left(  s\right)
)^{2}=0. \label{s15}%
\end{equation}

\noindent From (\ref{s15}), we get%

\[
\frac{d\cos\tau\left(  s\right)  }{ds}=\frac{d\sin\tau\left(  s\right)  }%
{ds}=0.
\]

\noindent So the $\tau\left(  s\right)  $ must be the constant function
$\tau_{0}.$ Thus we obtain%

\begin{align}
\cos\tau_{0}  &  =\frac{1+\alpha^{\prime}\left(  s\right)  }{\lambda
},\label{k3}\\
\sin\tau_{0}  &  =\frac{\beta^{\prime}\left(  s\right)  }{\lambda}. \label{k4}%
\end{align}

\noindent From (\ref{s13}), it holds%

\begin{equation}
\overline{W_{1}}\left(  \overline{s}\right)  =\cos\tau_{0}W_{1}\left(
s\right)  +\sin\tau_{0}W_{2}\left(  s\right)  . \label{k5}%
\end{equation}

\noindent Now the equation (\ref{s14}) becomes%

\begin{equation}
\lambda\overline{L}\left(  \overline{s}\right)  =\left(  \cos\tau_{0}%
-k_{2}\sin\tau_{0}\right)  L\left(  s\right)  . \label{s16}%
\end{equation}

\noindent Note that, since $\lambda\neq0$ in (\ref{s16}), we have%

\[
\cos\tau_{0}-k_{2}\sin\tau_{0}\neq0.
\]

\noindent So we can write the following%

\begin{equation}
\overline{N}\left(  \overline{s}\right)  \left(  \cos\tau_{0}-k_{2}\sin
\tau_{0}\right)  =\lambda N\left(  s\right)  . \label{s17}%
\end{equation}

\noindent If we differentiate (\ref{s16}) with respect to $s$, we get%
\begin{align}
\lambda^{2}\overline{k_{1}}\left(  \overline{s}\right)  \overline{W_{2}%
}\left(  \overline{s}\right)   &  =\frac{d\left(  \cos\tau_{0}-k_{2}\sin
\tau_{0}\right)  }{ds}L\left(  s\right) \label{s18}\\
&  +\left(  \cos\tau_{0}-k_{2}\sin\tau_{0}\right)  k_{1}\left(  s\right)
W_{2}\left(  s\right)  .\nonumber
\end{align}

\noindent If we use (b) ( $k_{1}\left(  s\right)  =0$ ) in (\ref{s18}), we
get
\begin{equation}
\overline{k_{1}}\left(  \overline{s}\right)  =0. \label{t1}%
\end{equation}

\noindent And therefore the equation (\ref{s18}) reduces to%

\begin{equation}
\frac{d\left(  \cos\tau_{0}-k_{2}\sin\tau_{0}\right)  }{ds}=0. \label{s19}%
\end{equation}

\noindent Then the nonzero term $\cos\tau_{0}-k_{2}\sin\tau_{0}$ in
(\ref{s19}), must be \ a constant. So we can write%

\begin{equation}
\cos\tau_{0}-k_{2}\sin\tau_{0}=\delta\neq0 \label{w1}%
\end{equation}

\noindent where $\delta$ is a constant.

\noindent If we wr\i te%

\begin{equation}
\frac{\lambda}{\delta}=\ell_{0}\neq0 \label{s21}%
\end{equation}

\noindent where $\ell_{0}$ is a constant. By using (\ref{s16}), (\ref{s17})
and (\ref{s21}), we get%

\begin{equation}
\overline{L}\left(  \overline{s}\right)  =\frac{1}{\ell_{0}}L\left(  s\right)
, \label{s22}%
\end{equation}

\begin{equation}
\overline{N}\left(  \overline{s}\right)  =\ell_{0}N\left(  s\right)  .
\label{s23}%
\end{equation}

\noindent Differentiating (\ref{s23}) with respect to $s$, we get%

\begin{equation}
\left(  \overline{W_{1}}\left(  \overline{s}\right)  -\overline{k_{2}}\left(
\overline{s}\right)  \overline{W_{2}}\left(  \overline{s}\right)  \right)
\lambda=\ell_{0}\left(  W_{1}\left(  s\right)  -k_{2}\left(  s\right)
W_{2}\left(  s\right)  \right)  . \label{s25}%
\end{equation}

\noindent From (\ref{s25}), we have%

\begin{equation}
(1+\left(  \overline{k_{2}}\left(  \overline{s}\right)  \right)  ^{2}%
)\lambda^{2}=(\ell_{0})^{2}\left(  1+(k_{2}\left(  s\right)  )^{2}\right)  .
\label{s26}%
\end{equation}

\noindent Using (\ref{w1}) and (\ref{s21}) into (\ref{s26}), we obtain%

\[
\left(  \overline{k_{2}}\left(  \overline{s}\right)  \right)  ^{2}=\left[
\frac{\sin\tau_{0}+k_{2}\left(  s\right)  \cos\tau_{0}}{\cos\tau_{0}%
-k_{2}\left(  s\right)  \sin\tau_{0}}\right]  ^{2}.
\]

\noindent Let us take%

\begin{equation}
\overline{k_{2}}\left(  \overline{s}\right)  =\frac{\sin\tau_{0}+k_{2}\left(
s\right)  \cos\tau_{0}}{\cos\tau_{0}-k_{2}\left(  s\right)  \sin\tau_{0}}.
\label{s27}%
\end{equation}

\noindent If we use (\ref{s13}),\ (\ref{w1}), (\ref{s21}) and (\ref{s27}) into
(\ref{s25}), we get%

\begin{equation}
\overline{W_{2}}\left(  \overline{s}\right)  =-\sin\tau_{0}W_{1}\left(
s\right)  +\cos\tau_{0}W_{2}\left(  s\right)  . \label{s28}%
\end{equation}

And it is trivial that the Cartan (1,2)-normal plane at each point $c\left(
s\right)  $ of $C$ coincides with the Cartan (1,2)-normal plane at
corresponding point $\overline{c}\left(  \overline{s}\right)  $ of
$\overline{C}$. Therefore $C$ is a (1,2)-Bertrand curve in $R_{1}^{4}$.
\end{proof}

\section{An example of 2-degenerate (1,2)-Bertrand curve in $R_{1}^{4}$}

\noindent\ \ \ \ Let $C$ be a curve in $R_{1}^{4}$ defined by%

\[
c\left(  s\right)  =\left(  \frac{s^{2}}{2},s,\frac{s^{2}}{2},1\right)  .
\]

\noindent Then we get the Cartan frame and the Cartan curvatures as follows:%

\begin{align*}
W_{1}\left(  s\right)   &  =\left(  s,1,s,1\right)  ,\\
L\left(  s\right)   &  =\left(  1,0,1,0\right)  ,\\
W_{2}\left(  s\right)   &  =\left(  \sqrt{3},0,\sqrt{3},1\right)  ,\\
N\left(  s\right)   &  =\left(  \frac{s^{2}}{2}+2,s,\frac{s^{2}}{2}+1,\sqrt
{3}\right)  ,\\
k_{1}\left(  s\right)   &  =k_{2}\left(  s\right)  =0.
\end{align*}

\noindent Now we choose the polynomial functions $\alpha$ and $\beta$ as follows:%

\begin{align*}
\alpha\left(  s\right)   &  =0,\\
\beta &  :R-\left\{  0\right\}  \rightarrow R;\text{ \ }\beta\left(  s\right)
=\sqrt{3}s.
\end{align*}

\noindent Its Bertrand mate is given by%

\[
\overline{c}\left(  \overline{s}\right)  =\left(  \frac{(\overline{s})^{2}}%
{8}+\frac{3\overline{s}}{2},\frac{\overline{s}}{2},\frac{(\overline{s})^{2}%
}{8}+\frac{3\overline{s}}{2},1+\frac{\sqrt{3}}{2}\overline{s}\right)
\]

\noindent where $\overline{s}$ is the pseudo-arc parameter of $\overline{C}$,
and a regular map $\varphi:s\rightarrow\overline{s}$ is given by%

\[
\overline{s}=\varphi\left(  s\right)  =2s.
\]

\bigskip

{\noindent Mehmet G\"{o}\c{c}men }

{\noindent Department of Mathematics, Faculty of Arts and Sciences,
\.{I}n\"{o}n\"{u} University }

{\noindent44280 Malatya, Turkey.}

\noindent Email: mgocmen1903@gmail.com

{\noindent Sad\i k Kele\c{s} }

{\noindent Department of Mathematics, Faculty of Arts and Sciences,
\.{I}n\"{o}n\"{u} University }

{\noindent44280 Malatya, Turkey }

{\noindent Email: skeles@inonu.edu.tr }

\end{document}